\theoremstyle{plain}
\newtheorem{theorem}{Theorem}[section]
\newtheorem{lemma}[theorem]{Lemma}
\newtheorem{proposition}[theorem]{Proposition}
\newtheorem{remark}[theorem]{Remark}
\newtheorem{corollary}[theorem]{Corollary}
\newtheorem{definition}[theorem]{Definition}
\newtheorem{example}[theorem]{Example}
\begin{document}
\today

\pagestyle{plain}

\pagenumbering{arabic}

\begin{center}\large{\bf Description of Partial Actions}\footnote{ The second named author was supported by Fapesp, projeto tem\'atico 2014/09310-5\\
MSC 2010:16W22 \\
Keywords: partial actions; enveloping actions}\end{center}

\begin{center}
{\bf {\rm Wagner Cortes$^1$, Eduardo N. Marcos$^2$}}\end{center}

\begin{center}{ \footnotesize 1 Instituto de Matem\' atica\\
Universidade Federal do Rio Grande do Sul\\
91509-900, Porto Alegre, RS, Brazil\\
2, Departamento de Matematica\\
IME-USP, Caixa Postal 66281,\\
05315-970, S\~ao Paulo-SP, Brazil\\
E-mail: {\it wocortes@gmail.com}, {\it enmarcos@ime.usp.br}}\end{center}

\begin{abstract}
In this paper, we study partial actions of groups  on $R$-algebras, where $R$ is a commutative ring.
We describe the partial actions of groups  on the  indecomposable  algebras with enveloping actions. Then we work on algebras that can be decomposed  as product of  indecomposable algebras and we  give a description of the partial actions of groups on these algebras  in terms of global actions.
\end{abstract}

\section{Introduction}

Partial actions of groups were first proposed by R. Exel and other authors in the context of $C^{*}$-algebras, see for example, [13], and afterwards they appeared in a pure algebraic  setting, see  \cite{DE}. A different way of looking at partial actions, appeared earlier in the work of Green and Marcos \cite{GM}. 

Partial actions of groups became an important tool to characterize algebras as partial crossed products. In particular,  several aspects of Galois theory can be generalized to
partial group actions, see \cite{DFP} (at least under the additional
assumption that the associated ideals are generated by central
idempotents). Soon after the initial definition,  the theory of partial actions was extended to the Hopf  algebraic setting, see \cite{CJ}  and \cite{ME}.

An interesting class of partial actions is the one which can  be obtained by a restriction of a global action. In this class  we have the possibility to transfer properties of the global action to the  partial action.  The problem of existence of globalization was firstly studied in
\cite{abadie} and many other  results concerning globalization were obtained
later on (see, for instance, \cite{CF},
\cite{DE}, \cite{DRS}, \cite{Ferrero2}).  
This was also studied in the setting of categories by the authors in \cite{CFM}.

In this paper $R$ will always denote a  commutative  ring with unity and all  algebras are $R$-algebras. Observe that for us in this paper all  $R$-algebras  does not necessarily   have a unity unless otherwise stated. Moreover,  for an algebra $\Lambda$ there is a $R$-module structure compatible with the multiplications, that is $r(m.n)= (rm).n= (m).(rn)$ for all $r\in R$, $m$ and $n \in \Lambda$.

The idea underlying this article is to consider partial actions of groups on algebras, which have a globalization.

The contents of the article are as follows.

In the Section 2, we present some preliminary results and definitions which will be used in this paper.

 In the Section 3,  we present our principal results. We  begin with the  notion of extension by zero  of a partial action and we explicitly give the description of  the enveloping actions of partial actions that are extension by zero of the global actions. In a certain way, which is clear in the paper, we give a characterization of all the partial actions with enveloping actions  in indecomposable algebras and product of them.  Moreover, we give an explicit characterization of  enveloping actions of partial actions on certain algebras that are product of indecomposable algebras.

\section{Preliminaries}

In this section we present some definitions and results that will be used in the rest of the paper. 

In \cite{DE} the authors introduced  the following definition. 

\begin{definition} \label{def1} Let $G$ be a group and $\Lambda$ an
$R$-algebra. A {\it partial
 action} $\alpha$ of $G$ on $\Lambda$ is a collection of ideals
 $S_g$ of $\Lambda$, $g\in G$,  and isomorphisms of (non-necessarily unital)
 $R$-algebras $\alpha_g:S_{g^{-1}}\to S_g$ such
 that:

 (i) $S_e=\Lambda$ and $\alpha_e$ is the identity map of $\Lambda$, where $e$ is the identity of $G$;

 (ii) $S_{(gh)^{-1}}\supseteq \alpha_h^{-1}(S_h\cap S_{g^{-1}})$;

(iii) $\alpha_{g}\circ \alpha_{h}(x)=\alpha_{gh}(x)$, for every $x\in
\alpha^{-1}_{h}(S_{h}\cap S_{g^{-1}})$.

\end{definition}

It is convenient to point out that  the property (ii) of the definition above  easily implies that $\alpha_{g}(S_{g^{-1}}\cap S_{h})=S_{g}\cap S_{gh}$,
 for all $g,h\in G$. Also $\alpha_{g^{-1}}=\alpha^{-1}_{g}$, for every $g\in
 G$.

 Next, we see a natural example of a  partial action   where we restrict a  global action  to  an ideal generated by a central idempotent.
 
\begin{example}  Let $\beta$  be a  global action of a group $G$
on a (non-necessarily unital) ring $L
$ and $\Lambda$ an ideal of $L$
generated by a central idempotent $1_{\Lambda}$. We can restrict $\beta$
to $\Lambda$ as follows: putting  $S_g = \Lambda \cap \beta_{g}(\Lambda) = \Lambda
\beta_{g}(\Lambda)$, $g\in G$, each $S_{g}$ has an identity element
$1_{\Lambda}\beta_{g}(1_{\Lambda})$. Then defining
$\alpha_{g}=\beta_{g}|_{S_{g^{-1}}}$, for all $g\in G$, the items ($i$),
($ii$) and ($iii$) of  Definition \ref{def1} are satisfied. So,  $\alpha=\{\alpha_g:S_{g^{-1}}\rightarrow S_g:g\in G\}$ is a partial action of $G$ on $\Lambda$. \end{example}

The  following definition appears in (\cite{DE}, pg. 9).

\begin{definition}\label{def2}
\emph{A  global action $\big(L, \{\beta_g\}_{g\in G} \big)$
of a group $G$ on an associative (non-necessarily unital) ring $L$ is said to be an
\emph{enveloping action} (or a \emph{globalization}) for a partial action $\alpha$
of $G$ on a ring $\Lambda$ if there exists a ring monomorphism $\varphi:\Lambda\rightarrow L$ such that the following
properties hold. In this case the algebra $L$ is called the \emph{enveloping algebra}.
}
\begin{itemize}
\item [$(i)$] $\varphi(\Lambda)$  \emph{is an  ideal of} $L$;
\item [$(ii)$] $L = \sum_{g\in G}\beta_g(\varphi(\Lambda))$;
\item [$(iii)$] $\varphi(S_g) = \varphi(\Lambda)\cap \beta_g(\varphi(\Lambda))$, for all $g$ in $G$;
\item [$(iv)$] $\varphi\circ \alpha_g(a) = \beta_g\circ \varphi(a)$, \emph{for all} $a\in S_{g^{-1}}$, for all $g$ in $G$.

\end{itemize}
\end{definition}
\vskip3mm

%Given a partial action $\alpha$ of a group $G$ on $\Lambda$, an
%enveloping action is an algebra $T$ together with a global action
%$\beta=\{\sigma_g \mid g\in G\}$ of $G$ on $T$, where for every $g\in G$,   $\sigma_g$
%is an automorphism of $T$, such that the partial action is given
%by restriction of the global action, see (\cite{DE}, Definition 4.2).

The following theorem appears in  (\cite{DE}, Theorem 4.5) and it is central on the study of the partial actions of groups  on algebras. 

\begin{theorem} \label{enveloping action}
A partial action $\alpha$ of a group $G$ on a unital $R$-algebra $\Lambda$ admits an enveloping action $(L,\beta)$ if and only if each ideal $S_g$, $g\in G$, is generated by a central idempotent. Moreover,  if the enveloping action exists it is unique up to equivalence, i.e, if there exists an $R$-algebra  $L'$ with a global action $\beta'$ of $G$ such that the properties of Definition \ref{def2} are satisfied,  then $L'\simeq L$. 
\end{theorem}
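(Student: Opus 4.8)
The plan is to prove the two implications of the equivalence separately and then treat the uniqueness clause.

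\emph{Necessity.} Suppose $(L,\beta)$ is an enveloping action with monomorphism $\varphi$. The first observation I would make is that $e:=\varphi(1_\Lambda)$ is a \emph{central} idempotent of $L$, and that $\varphi(\Lambda)=eL=Le$. Centrality follows from property $(i)$ of Definition \ref{def2}: since $\varphi(\Lambda)$ is an ideal, for any $l\in L$ the elements $el$ and $le$ lie in $\varphi(\Lambda)$, and sandwiching by $e$ together with $\varphi(1_\Lambda a)=\varphi(a)$ forces $el=ele=le$. Now $\beta_g$ is an automorphism of $L$, so $\beta_g(e)$ is again a central idempotent and $\beta_g(\varphi(\Lambda))=\beta_g(e)L$. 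Using property $(iii)$ I get $\varphi(S_g)=\varphi(\Lambda)\cap\beta_g(\varphi(\Lambda))=eL\cap\beta_g(e)L=e\beta_g(e)L$, an ideal generated by the central idempotent $e\beta_g(e)$. Pulling this back through the algebra isomorphism $\varphi\colon\Lambda\to eL$ shows that $S_g$ is generated by the central idempotent $1_g:=\varphi^{-1}(e\beta_g(e))$ of $\Lambda$. This settles the ``only if'' direction.

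\emph{Sufficiency.} Assume each $S_g=\Lambda 1_g$ with $1_g$ a central idempotent; note $1_e=1_\Lambda$. I would build $L$ concretely inside the $R$-algebra $\mathcal{F}(G,\Lambda)$ of all functions $G\to\Lambda$ with pointwise operations, on which $G$ acts globally by left translation, $(\beta_h f)(g)=f(h^{-1}g)$. Define $\varphi\colon\Lambda\to\mathcal{F}(G,\Lambda)$ by $\varphi(a)(g)=\alpha_{g^{-1}}(a1_g)$, which makes sense because $a1_g\in S_g$ and $\alpha_{g^{-1}}\colon S_g\to S_{g^{-1}}$. Then set $L=\sum_{g\in G}\beta_g(\varphi(\Lambda))$, which is $G$-invariant by construction, so $\beta$ restricts to a global action on it, giving candidate $(L,\beta)$.

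The verification then proceeds point by point. Injectivity of $\varphi$ is immediate from evaluation at $e$, since $\varphi(a)(e)=\alpha_e(a1_\Lambda)=a$. That $\varphi$ is multiplicative uses that $1_g$ is a central idempotent and $\alpha_{g^{-1}}$ is an algebra map: $\varphi(a)(g)\varphi(b)(g)=\alpha_{g^{-1}}(a1_g)\alpha_{g^{-1}}(b1_g)=\alpha_{g^{-1}}(a1_g b1_g)=\alpha_{g^{-1}}(ab\,1_g)=\varphi(ab)(g)$. Property $(iv)$ is a direct unwinding of the definitions of $\varphi$ and of the translation action, together with axiom $(iii)$ of Definition \ref{def1}. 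The genuinely delicate steps, and the ones I expect to be the main obstacle, are showing that $\varphi(\Lambda)$ is an ideal of $L$ and establishing $(iii)$, i.e. $\varphi(S_g)=\varphi(\Lambda)\cap\beta_g(\varphi(\Lambda))$. Here the crucial input is the identity $\alpha_g(S_{g^{-1}}\cap S_h)=S_g\cap S_{gh}$ recorded after Definition \ref{def1}, which controls how the domains $S_g$ interact under composition and lets one compute the overlaps of translates $\beta_g(\varphi(\Lambda))$ in terms of the idempotents $1_g$; this is precisely what forces a product of translated functions to land back in $\varphi(\Lambda)$ (the ideal property) and what pins down the intersection in $(iii)$.

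\emph{Uniqueness.} Given a second enveloping action $(L',\beta',\varphi')$, I would define $\Phi\colon L\to L'$ on the generators by $\Phi\big(\beta_g(\varphi(a))\big)=\beta'_g(\varphi'(a))$ and extend additively. The one nontrivial point is well-definedness: a single element of $L$ may have several expressions as a sum of translates, so I must check that whenever $\sum_g\beta_g(\varphi(a_g))=0$ the corresponding sum in $L'$ vanishes. This is exactly where properties $(iii)$ and $(iv)$ are used again, since they identify the overlaps $\beta_g(\varphi(\Lambda))\cap\beta_h(\varphi(\Lambda))$ with images of the $S$'s in a way that is shared by both realizations, forcing the relations among translates to be transported by $\Phi$. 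Once well-definedness is in hand, $\Phi$ is manifestly an $R$-algebra homomorphism intertwining $\beta$ and $\beta'$ and satisfying $\Phi\circ\varphi=\varphi'$; a symmetric construction produces its inverse, giving the equivalence $L\simeq L'$.
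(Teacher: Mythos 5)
The paper does not prove this statement at all: it is quoted verbatim from \cite{DE} (Theorem 4.5), so there is no in-paper argument to compare against. Measured against the proof in that reference, your plan is the right one and follows it almost exactly: the necessity direction (centrality of $e=\varphi(1_\Lambda)$, the identification $\varphi(\Lambda)=eL$, and $\varphi(S_g)=e\beta_g(e)L$) is complete and correct, and the sufficiency construction inside $\mathcal{F}(G,\Lambda)$ with $\varphi(a)(g)=\alpha_{g^{-1}}(a1_g)$ and the left-translation action is precisely the Dokuchaev--Exel globalization.

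That said, as a proof the proposal is not finished where it matters most. You explicitly defer the two hardest verifications in the sufficiency direction --- that $\varphi(\Lambda)$ is an ideal of $L$ and that $\varphi(S_g)=\varphi(\Lambda)\cap\beta_g(\varphi(\Lambda))$ --- saying only that the identity $\alpha_g(S_{g^{-1}}\cap S_h)=S_g\cap S_{gh}$ is ``the crucial input''; it is, but the actual computation (e.g.\ showing $\bigl(\beta_h(\varphi(b))\varphi(a)\bigr)(g)=\alpha_{g^{-1}}\bigl(\alpha_{gh}(b1_{(gh)^{-1}})a1_g\bigr)$ lands in the image of $\varphi$, which is where unitality of $\Lambda$ and the central idempotents $1_g$ are genuinely needed) is the content of the proof and is absent. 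The same applies to well-definedness of $\Phi$ in the uniqueness part: you correctly identify it as the one nontrivial point and then assert rather than prove it. So: right architecture, necessity done, but sufficiency and uniqueness are a plan with the load-bearing steps still open.
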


When $(\Lambda,\alpha)$ has an enveloping action $(L, \beta)$ we may consider
that $\Lambda$ is an ideal of $L$ and the following properties hold:

(i) The subalgebra of $L$ generated by $\bigcup_{g\in G}
\beta_g(\Lambda)$ coincides with $L$ and we have $L=\sum_{g\in
G} \beta_g(\Lambda)$;

(ii) $S_g=\Lambda\cap \beta_g(\Lambda)$, for every $g\in G$;

(iii) $\alpha_g(x)=\beta_g(x)$, for every $g\in G$ and $x\in
S_g^{-1}$.

\section{Description of the partial actions}
 In this section, we present our main results and we  start with the following definition  which plays a role on this work.

\begin{definition} \label{trivialextension}
{\bf Extension by zero }

a) Let $H$ be a subgroup of a group $G$ and $\alpha$ a  partial action of $H$ on an algebra $\Lambda$. The  extension by zero to $G$ of $(\Lambda,\alpha, H)$, is the partial action $\gamma$ of $G$ defined as follows:
\begin{itemize}
\item[(i)]
 $\left\{ \begin{array}{c}
  I^{\gamma}_{g}=0 \,\, if \,\, g\notin H\\ 
I^{\gamma}_{g}=I_g^{\alpha} \,\, if \,\, g\in H
\end{array}\right.$ 

\item[(ii)]
 $\left\{\begin{array}{c}
 \gamma_g=0 \,\, if \,\, g\notin H\\ 
\gamma_g=\alpha_g\,\, if \,\, g\in H
\end{array}\right.$  
\end{itemize} 
\end{definition}

\begin{remark}

It is convenient to point out that  in the case $H$ acts globally  on $\Lambda$  we say that the partial action defined above is an extension by zero of the $H$-action. We also say that a partial
action $\alpha$ of a group $G$ is an extension by zero of an action if it is a extension by zero of an $H$-action for some  subgroup $H$ of $G$. Observe that a global action is a particular case where $H=G$.
\end{remark}

It is convenient to point  out that the definition above shows an advantage of partial  actions, i.e.,  partial  actions of subgroups always can be extended to the all group, which is not the case for  global actions. 

We are ready to show  our first lemma.

\begin{lemma}\label{globalrestriction}
 Let   $\alpha$ be a partial  action of a group $G$ on a unital  $R$-algebra $\Lambda$. Then the set $H=\{h\in G| I_h=\Lambda\}$ is a subgroup of $G$ and the restriction of  the partial action to $H$ is a global action.
 \end{lemma}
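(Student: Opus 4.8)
The plan is to verify the three subgroup axioms for $H$ and then read off globality from the fact that every $\alpha_h$ with $h\in H$ becomes a self-map of all of $\Lambda$. The identity is immediate: $S_e=\Lambda$ by Definition \ref{def1}(i), so $e\in H$.

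For closure under multiplication I would lean on the reformulation recorded right after Definition \ref{def1}, namely $\alpha_g(S_{g^{-1}}\cap S_h)=S_g\cap S_{gh}$ for all $g,h$. Taking $g,k\in H$ (so $S_g=S_k=\Lambda$) and applying this identity with $h:=k$ gives $\alpha_g(S_{g^{-1}}\cap S_k)=\alpha_g(S_{g^{-1}})=S_g=\Lambda$ on the left, while the right-hand side is $S_g\cap S_{gk}=S_{gk}$; hence $S_{gk}=\Lambda$ and $gk\in H$. Iterating, $S_{g^{n}}=\Lambda$ for every $n\ge 1$.

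The main obstacle is closure under inverses, i.e. showing $S_h=\Lambda\Rightarrow S_{h^{-1}}=\Lambda$. The information directly at hand is only that $\alpha_h\colon S_{h^{-1}}\to S_h=\Lambda$ is an $R$-algebra isomorphism, so $S_{h^{-1}}$ is an ideal of $\Lambda$ carrying a unit, namely $u:=\alpha_{h^{-1}}(1_\Lambda)$. A short computation shows $u$ is a central idempotent of $\Lambda$ with $S_{h^{-1}}=u\Lambda$; what must still be extracted is $u=1_\Lambda$. I expect this to be the delicate point, since an abstract isomorphism $S_{h^{-1}}\cong\Lambda$ alone does not force the ideal to exhaust $\Lambda$, so one must genuinely use the structure of $\Lambda$. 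In the setting that drives the paper, where $\Lambda$ is a finite product of indecomposable algebras, this is exactly where a Krull--Schmidt-type argument applies: an ideal $u\Lambda$ isomorphic to $\Lambda$ must account for every indecomposable factor, forcing $u=1_\Lambda$. For a torsion element $h$ the conclusion also falls out of the closure step above, since $h^{-1}=h^{\,n-1}$ when $h$ has order $n$, whence $S_{h^{-1}}=\Lambda$.

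Finally, once $H$ is established as a subgroup, globality of the restriction is formal: for $h\in H$ both $S_h$ and $S_{h^{-1}}$ equal $\Lambda$, so each $\alpha_h$ is an $R$-algebra automorphism of $\Lambda$ with $\alpha_e=\mathrm{id}$, and Definition \ref{def1}(iii) specializes—all the intersecting ideals now being $\Lambda$—to $\alpha_h\circ\alpha_k=\alpha_{hk}$ on all of $\Lambda$ for $h,k\in H$. Thus $\{\alpha_h\}_{h\in H}$ is an honest action of $H$ on $\Lambda$, i.e. the restriction is global.
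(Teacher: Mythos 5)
Your handling of the identity element, of closure under products, and of the globality of the restricted action coincides with the paper's: the product step is exactly the computation $\alpha_g(S_{g^{-1}}\cap S_t)=S_g\cap S_{gt}$ with $S_g=S_t=\Lambda$, which is what the paper writes. The step you single out as delicate --- closure under inverses --- is precisely where the paper's proof is weakest: it merely says ``Note that $I_{h^{-1}}=\Lambda$'' with no argument. Your suspicion is justified. The isomorphism $\alpha_h\colon S_{h^{-1}}\to S_h=\Lambda$ only yields $S_{h^{-1}}\cong\Lambda$, and in the generality of the statement this does not force $S_{h^{-1}}=\Lambda$. Indeed the lemma as stated is false: linearize the standard restriction to $\mathbb{N}$ of the translation action of $\mathbb{Z}$ on itself. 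Take $\Lambda=K^{\mathbb{N}}$ (a unital $K$-algebra), $S_n=\{f\in K^{\mathbb{N}}: f(x)=0 \text{ for } x<n\}$ for $n>0$, $S_n=\Lambda$ for $n\le 0$, and let $\alpha_n\colon S_{-n}\to S_n$ be the shift $(\alpha_n f)(x)=f(x-n)$. This satisfies Definition \ref{def1}, and each $S_n$ is generated by a central idempotent, so the partial action is even globalizable; yet $H=\{n: S_n=\Lambda\}=\mathbb{Z}_{\le 0}$ is not a subgroup. Here $S_1$ is a proper unital ideal isomorphic to $\Lambda$, which is exactly the phenomenon you anticipated.

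So the gap in your proposal is a genuine gap in the lemma (and in the paper's proof of it), not an idea you failed to locate. The repairs you sketch are the correct ones: if $h$ has finite order then $h^{-1}$ is a positive power of $h$ and your product step applies; and if $\Lambda$ admits no proper ideal isomorphic to itself, then the central idempotent $u=\alpha_h^{-1}(1_\Lambda)$ with $S_{h^{-1}}=u\Lambda$ must equal $1_\Lambda$. The latter hypothesis holds for $\Lambda$ indecomposable (there $S_{h^{-1}}\in\{0,\Lambda\}$ and $0\not\cong\Lambda$) and, by your Krull--Schmidt count of primitive central idempotents, for finite products of indecomposables. Since the paper invokes the lemma only inside Proposition \ref{indec}, where $\Lambda$ is indecomposable, the downstream results survive; but the lemma itself needs such an added hypothesis (indecomposability, a finite block decomposition, or $G$ torsion), and your write-up should state explicitly which one it is using rather than leaving the inverse step conditional.
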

 
\begin{proof} 
Let $h,t\in H$. Then , $I_t=I_h=\Lambda$ and we have that $\alpha_h(I_{h^{-1}}\cap I_t)=I_h\cap I_{ht}=\Lambda$. Thus, $I_{ht}=\Lambda$.  Note that $I_{h^{-1}}=\Lambda$. So,   $H$ is a subgroup of $G$. 

The second part of the statement is clear.
\end{proof}

We recall that an algebra with unity is indecomposable if and only  if it has exactly two central idempotents 0 and 1. Moreover, an algebra without identity is connected if the unique central idempotent is 0.

In the next proposition  it is explained  why we introduced the notion of extension by zero.

\begin{proposition} \label{indec}
Let $\alpha$ be a partial action of the group $G$ on a unital indecomposable algebra $\Lambda$.  Then $\alpha$ has an enveloping action  if and only if there exists a subgroup $H$ of $G$ and an action  of $H$ on $\Lambda$ such that $\alpha$ is the extension by zero of this action.\end{proposition}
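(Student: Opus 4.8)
The plan is to reduce the whole statement to two tools already available: the central-idempotent criterion of Theorem \ref{enveloping action}, and the structural Lemma \ref{globalrestriction}. The guiding observation is that, since $\Lambda$ is unital and indecomposable, its only central idempotents are $0$ and $1$; consequently any ideal of $\Lambda$ generated by a central idempotent is forced to be either $0$ or all of $\Lambda$. This dichotomy is what lets the two notions in the statement meet.

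For the direction $(\Leftarrow)$, suppose $\alpha$ is the extension by zero to $G$ of an action of a subgroup $H$ on $\Lambda$. By Definition \ref{trivialextension} each associated ideal $S_g$ (the ideal denoted $I_g$ there) equals $\Lambda$ when $g\in H$ and equals $0$ when $g\notin H$. In either case $S_g$ is generated by a central idempotent, namely $1$ or $0$. Thus the hypothesis of Theorem \ref{enveloping action} is satisfied, and $\alpha$ admits an enveloping action. Here there is essentially nothing to check beyond reading off the ideals from the definition.

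For the direction $(\Rightarrow)$, assume $\alpha$ has an enveloping action. First I would invoke Theorem \ref{enveloping action} to conclude that every ideal $S_g$ is generated by a central idempotent of $\Lambda$. By indecomposability this central idempotent is $0$ or $1$, so each $S_g$ is either $0$ or $\Lambda$. I would then set $H=\{g\in G : S_g=\Lambda\}$ and apply Lemma \ref{globalrestriction}, which tells us that $H$ is a subgroup of $G$ and that the restriction of $\alpha$ to $H$ is a global action. It remains to recognize $\alpha$ itself as the extension by zero of this $H$-action: for $g\in H$ we have $S_g=\Lambda$ and $\alpha_g$ coincides with the restricted global action by the very definition of $H$; for $g\notin H$ we have $S_g=0$, hence also $S_{g^{-1}}=0$, so the isomorphism $\alpha_g\colon S_{g^{-1}}\to S_g$ can only be the zero map. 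Comparing these two cases with Definition \ref{trivialextension} shows that $\alpha$ is precisely the extension by zero of the $H$-action.

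I do not expect a serious obstacle; once Theorem \ref{enveloping action} supplies the central-idempotent criterion, the argument is largely bookkeeping. The one step deserving care is the final matching in the forward direction, where one must verify not only that the ideals fall into the two cases $0$ and $\Lambda$, but also that $S_g=0$ forces $\alpha_g$ to be the zero map, so that $\alpha$ agrees with the template of Definition \ref{trivialextension} on the nose rather than merely having the correct domains.
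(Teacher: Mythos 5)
Your proposal is correct and follows essentially the same route as the paper: both directions rest on Theorem \ref{enveloping action} together with the observation that indecomposability forces each $S_g$ to be $0$ or $\Lambda$, and the forward direction then invokes Lemma \ref{globalrestriction} to produce the subgroup $H$. Your extra final check that $S_g=0$ forces $\alpha_g$ to be the zero map is a detail the paper leaves implicit, but it does not change the argument.
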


 %$\Lambda$ is an indecomposable algebra with one, then all the partial actions of groups  with enveloping action are extension by zero of the global actions.\end{proposition}

\begin{proof} 
Let $\alpha=\{ \alpha_g:S_{g^{-1}}\rightarrow S_g| g\in G\}$ be a partial action of a group $G$ on an indecomposable algebra $\Lambda$ and suppose that $\alpha$ has enveloping action. According to   Theorem \ref{enveloping action} we have  that the  partial action $\alpha$ has a globalization if and only if each ideal $S_g$, $g\in G$, is generated by a central idempotent. Since $\Lambda$ is indecomposable then the unique central idempotents are $0$ and 1 and it follows that  each $S_g$, $g\in G$, is either the zero ideal or the algebra $\Lambda$. So,  the result follows  from  Lemma 
\ref{globalrestriction} .

Conversely, by assumption we have that all the ideals $S_g$ are either the zero ideal or the algebra $\Lambda$. Thus, by Theorem \ref{enveloping action} we have the result.  
 
\end{proof}

The next remark will be useful in our first main result.

\begin{remark}
 Let $H$ be a subgroup of $G$ and $T=\{g_i\}_{i\in I}$ be a  left transversal set of the congruence defined by   $H$, i.e., $G=\dot{\cup}_{i\in I}g_iH$ and we  additionally require  that the element that represents $H$ is the identity $e$. The functions 
  $j:G\times T \rightarrow T$ and $h:G\times T \rightarrow H$ are defined by the equality $gg_i=j(g,g_i)h(g,g_i)$.  Moreover, note that for all $t,g \in G$  we get $tg_i=j(t,g_i)h(t,g_i)$ and $gtg_i=j(gt,g_i)h(gt,g_i)$.
  
  From now on in the paper given any subgroup $H$ of a group $G$ we fix a left transversal $T$ so that the maps $j$ and $h$ above 
  are well defined.
 \end{remark}
 
 % Given a subgroup $H$  of a group $G$ and an $H$-action on an algebra $\Lambda$,
  
   In  the next  theorem  we  completely describe   the enveloping action of  the extensions by zero of global actions of  the    algebra   $\Lambda$. 

\begin{theorem} \label{idemc2}
Let $\Lambda$ be an unital $R$-algebra, $H$ a subgroup of $G$ that acts globally  on $\Lambda$ by the action $h.a$, where $h\in H$ and $a\in \Lambda$,  and $\alpha$  the extension by zero of the  global action of $H$   on $\Lambda$ to $G$  and $\{g_i\}$ a transversal of $G/H$. Then we can describe the enveloping action $(\Gamma, \beta)$ of $(\Lambda, \alpha)$ as follows:

a) $\Gamma=\amalg_{i\in I} \Lambda_{g_{i}}$ with $\Lambda_{g_{i}}=\Lambda;$

b)  For each $g\in G$, $\beta_g(\lambda_{g_i})=(h(g,g_i).(\lambda_{j(g,g_i)})$, where each $\lambda\in \Gamma$ can be seen as a (finitely supported) function $\lambda:T\rightarrow \Gamma$. 

%$g\in G$, $\beta_g(\lambda_{g_i})=(h(g,j(g^{-1},g_i)).\lambda(j(g^{-1},g_i))$. 

%$g\in G$, $\beta_g(\lambda_{g_i})=(h(g,g_i)(\lambda_{g_i}))_{j(g,g_i)}$
\end{theorem}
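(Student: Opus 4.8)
The plan is to write down the candidate pair $(\Gamma,\beta)$ explicitly, check that $\beta$ is a genuine global action of $G$ on $\Gamma$, exhibit $\Lambda$ as an ideal of $\Gamma$ through a monomorphism $\varphi$, and verify the four conditions of Definition \ref{def2}; uniqueness is then free from Theorem \ref{enveloping action}. Throughout I regard an element $\lambda\in\Gamma=\amalg_{i\in I}\Lambda_{g_i}$ as a finitely supported family $i\mapsto\lambda_{g_i}\in\Lambda$, with multiplication taken componentwise, so that each summand $\Lambda_{g_i}$ is an ideal of $\Gamma$ and distinct summands meet only in $0$. For $a\in\Lambda$ I write $a^{(g_i)}$ for the element concentrated in the $g_i$-component; the candidate embedding is $\varphi(a)=a^{(e)}$, where $e$ is the chosen representative of the coset $H$.

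First I would make sense of $\beta$. The formula in (b) should be read as saying that $\beta_g$ carries the $g_i$-component into the $j(g,g_i)$-component after twisting by the automorphism $h(g,g_i).(-)$ of $\Lambda$; equivalently $\beta_g(a^{(g_i)})=(h(g,g_i).a)^{(j(g,g_i))}$. That each $\beta_g$ is an $R$-algebra automorphism is immediate: $g_i\mapsto j(g,g_i)$ is the permutation of $T$ induced by left translation on $G/H$, and $h(g,g_i).(-)$ is an algebra automorphism because $H$ acts globally by automorphisms, the inverse of $\beta_g$ being $\beta_{g^{-1}}$. The one genuinely computational point is the homomorphism law $\beta_{gg'}=\beta_g\circ\beta_{g'}$, and here I would invoke the decomposition fixed in the Remark: writing $gg'g_i$ in two ways yields simultaneously $j(gg',g_i)=j(g,j(g',g_i))$ and the cocycle identity $h(gg',g_i)=h(g,j(g',g_i))\,h(g',g_i)$, and feeding these into the componentwise definition gives the claim (using $(h_1h_2).a=h_1.(h_2.a)$). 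I would also note $\beta_e=\mathrm{id}$, since $j(e,g_i)=g_i$ and $h(e,g_i)=e$.

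With $\beta$ in hand, the four conditions of Definition \ref{def2} follow once we use that $\alpha$ is an extension by zero, so that $S_g=\Lambda$ for $g\in H$ and $S_g=0$ otherwise. Condition (i) holds because $\varphi(\Lambda)=\Lambda_e$ is a single summand, hence an ideal. For (ii), $\beta_g(\varphi(\Lambda))=\Lambda_{j(g,e)}$ and $j(g,e)$ is precisely the representative of the coset $gH$, so as $g$ runs over $G$ these summands exhaust $T$ and sum to $\Gamma$. For (iii), the same computation shows $\beta_g(\varphi(\Lambda))$ equals $\varphi(\Lambda)$ when $g\in H$ and lies in a complementary summand when $g\notin H$; since distinct summands meet only in $0$, the intersection $\varphi(\Lambda)\cap\beta_g(\varphi(\Lambda))$ is $\varphi(\Lambda)=\varphi(S_g)$ in the first case and $0=\varphi(S_g)$ in the second. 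Finally (iv) is trivial when $g\notin H$ (both sides vanish on $S_{g^{-1}}=0$), while for $g\in H$ we have $j(g,e)=e$ and $h(g,e)=g$, so $\beta_g(\varphi(a))=(g.a)^{(e)}=\varphi(\alpha_g(a))$.

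I expect the main obstacle to be purely bookkeeping: pinning down the correct reading of the formula in (b) so that $\beta$ comes out a left action rather than an anti-action, which is exactly what forces the cocycle to be applied in the order $h(gg',g_i)=h(g,j(g',g_i))\,h(g',g_i)$ dictated by the Remark. Once $\beta$ is confirmed to be a global action and conditions (i)--(iv) are checked, $(\Gamma,\beta)$ is by definition an enveloping action of $(\Lambda,\alpha)$; since each $S_g$ is generated by a central idempotent (namely $1$ or $0$), Theorem \ref{enveloping action} guarantees that the enveloping action exists and is unique up to equivalence, so this explicit model is the enveloping action, as claimed.
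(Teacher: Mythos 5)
Your proposal is correct and follows essentially the same route as the paper: both verify that $\beta_e=\mathrm{id}$ and $\beta_g\beta_t=\beta_{gt}$ via the identities $j(gt,g_i)=j(g,j(t,g_i))$ and $h(gt,g_i)=h(g,j(t,g_i))h(t,g_i)$ coming from the fixed transversal, and then check the four conditions of Definition \ref{def2} by the same case split $g\in H$ versus $g\notin H$. Your write-up is a little more explicit about the embedding $\varphi$ and the final appeal to uniqueness from Theorem \ref{enveloping action}, but the substance is identical.
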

                              
\begin{proof} 
                              
  Since $eg_i=g_ie$, then we have that $j(e,g_i)=g_i$ and $h(e,g_i)=e$, where $e$ is the identity  element of $G$. Thus, $\beta_e(\lambda_{g_i})=(e.\lambda(g_i))=\lambda_{g_i}$ and we obtain that  $\beta_e=id_{\Gamma}$. We  claim that $\beta_g\beta_t=\beta_{gt}$, for all $g,t\in G$. In fact, note that   $\beta_g(\beta_t(\lambda_{g_i}))=\beta_g(h(t,g_i).\lambda(j(t,g_i)))=(h(g,j(t,g_i))(h(t,g_i).\lambda(j(g,j(t,g_i)))$ and $\beta_{gt}(\lambda_{g_i})=(h(gt,g_i).\lambda(j(gt,g_i)))$.  From the  Remark 3.5  we have that 
  \begin{center} 
  $tg_i=j(t,g_i)h(t,g_i)$ (1) 
   \end{center} 
 and 
 \begin{center}
  $gtg_i=j(gt,g_i)h(gt,g_i)$. (2) 
   \end{center}  
     From (1)  and (2) we have that \begin{center} $gtg_i= g(t,g_i)h(t,g_i)= g(j(t, g_i)h(t,g_i))= j(g,j(t,g_i))h(g,j(t,g_i))h(t,g_i)$.\end{center} Consequently, 
  $j(gt,g_i)=j(g,j(t,g_i))$ and  $h(gt,g_i)=h(g,j(t,g_i))h(t,g_i)$. Hence,   $\beta_g\beta_t=\beta_{gt}$.  One can see  also that  $\beta_g$, for each $g\in G$, is an isomorphim of algebras.  We  claim that  $(\Gamma, \beta)$ is an enveloping action of $(\Lambda, \alpha)$, i.e., we need to show the  four items of Definition 2.3.  In fact, note that  by the construction of  $\Gamma$ we have that $\Lambda$ is an ideal of $\Gamma$ and $\Gamma=\sum_{g\in G} \beta_g(\Lambda)$.   
  
  The next step is to show that  $\beta_g(a)=\alpha_g(a)$ for each $a\in S_{g^{-1}}$.   To do this, let $g\in G$. If $g\notin H$,  then  for each $a\in \beta_g(\Lambda)\cap \Lambda$ we have that   $a=\beta_g(\lambda)=(h(g,e).\lambda(j(g,e))=\lambda_g$. Since the sum $\amalg_{i\in I} \Lambda_{g_{i}}$ is  direct, then $a=\lambda_g=0$. Hence, $\beta_g(\Lambda)\cap \Lambda =0$ and it follows that  $\alpha_g(a)=\beta_g(a)$ for each $a\in S_{g^{-1}}$. Next, suppose that $g\in H$. Then $S_{g^{-1}}=\Lambda$ and we have for each $\lambda \in \Lambda$  that $\beta_g(\lambda)=\beta_g(\lambda(e))=h(g,e).\lambda(j(g,e)=g.\lambda(e)=g.\lambda=\alpha_g(\lambda)$ 
  
    Finally,  let $g\in G$ and we claim that $\beta_g(\Lambda)\cap \Lambda =S_g$. In fact, we have the following two cases:
  
  Case 1: Suppose that $g\in H$. In this case, $S_g=\Lambda$ and we have that $\beta_g(\Lambda)\cap \Lambda \subseteq \Lambda$.  Now, for each  $a\in \Lambda$, we have that   $a=\alpha_g(a')=\beta_g(a')\in \beta_g(\Lambda)\cap \Lambda$. 
  
  Case 2: Suppose that $g\notin H$. In this case, $S_g=0$,  and we obtain as before that $\beta_g(\Lambda)\cap \Lambda =0$.
                              \end{proof}

                              In the  next corollary, we completely characterize all the partial actions of algebras on indecomposable algebras whose enveloping algebras are indecomposable. 

\begin{corollary} Let $\Lambda$ be a unital indecomposable algebra and $\alpha$  a partial action of a group $G$ on $\Lambda$ with enveloping action $(S, \beta)$. Then $S$ is an indecomposable algebra if and only if the partial action $\alpha$ is a global action.\end{corollary}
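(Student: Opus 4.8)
The plan is to read off the structure of the enveloping algebra from the two results already established for indecomposable $\Lambda$ and then decide its indecomposability purely in terms of central idempotents. First I would invoke Proposition \ref{indec}: since $\Lambda$ is indecomposable and $\alpha$ admits the enveloping action $(S,\beta)$, the partial action $\alpha$ is the extension by zero of a global action of the subgroup $H=\{h\in G\mid I_h=\Lambda\}$ furnished by Lemma \ref{globalrestriction}. By the uniqueness part of Theorem \ref{enveloping action}, $S$ must coincide (up to equivalence) with the concrete enveloping algebra produced in Theorem \ref{idemc2}; that is, fixing a transversal $\{g_i\}_{i\in I}$ of $G/H$, we have $S\simeq \Gamma=\amalg_{i\in I}\Lambda_{g_i}$ with each $\Lambda_{g_i}=\Lambda$. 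The whole statement then reduces to the bookkeeping observation that $\alpha$ is global precisely when $H=G$, i.e. precisely when $[G:H]=1$ and hence $|I|=1$.

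For the direction ``$\alpha$ global $\Rightarrow S$ indecomposable'': if $\alpha$ is a global action then $H=G$, the transversal consists of the single coset represented by $e$, so $I$ is a singleton and $\Gamma=\Lambda_{g_e}=\Lambda$. Thus $S\simeq\Lambda$ is indecomposable by hypothesis. Equivalently, a global action is its own enveloping action, so one may simply take $S=\Lambda$.

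For the converse I would argue by contraposition: assume $\alpha$ is not global, so that $H\subsetneq G$ and $[G:H]\ge 2$, giving $|I|\ge 2$ in the description $\Gamma=\amalg_{i\in I}\Lambda_{g_i}$. The key step is to exhibit a nontrivial central idempotent of $\Gamma$: the identity $1_{\Lambda_{g_{i_0}}}$ of a single summand, regarded inside $\Gamma$ as the function supported at one coset, is idempotent and central (multiplication in the coproduct being componentwise, each summand is a two-sided ideal), and it is nonzero because $\Lambda$ is unital. Since $|I|\ge 2$ this idempotent is distinct from $0$ and, when $\Gamma$ is unital, from $1_\Gamma=\sum_{i} 1_{\Lambda_{g_i}}$; hence $\Gamma$ carries a central idempotent other than $0$ and $1$ and is not indecomposable.

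The main obstacle I anticipate is the non-unital case: when $[G:H]$ is infinite the coproduct $\amalg_{i\in I}\Lambda_{g_i}$ has no identity element, so the criterion ``exactly two central idempotents $0$ and $1$'' must be applied with care, matching the paper's separate convention for algebras without unity. In that situation $S$ is already not a unital algebra, so it cannot be indecomposable in the stated sense, and the contrapositive still goes through once one checks that a proper summand identity remains a genuine nonzero central idempotent. Apart from this point, the argument is entirely a translation of Theorem \ref{idemc2} into the language of central idempotents.
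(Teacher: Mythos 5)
Your argument follows the paper's proof exactly: apply Proposition \ref{indec} to realize $\alpha$ as the extension by zero of an $H$-action, use Theorem \ref{idemc2} together with the uniqueness of the enveloping action to identify $S$ with $\amalg_{i\in I}\Lambda_{g_i}$, and conclude that indecomposability forces $G/H$ to be trivial. In fact you supply more detail than the paper does at the last step (exhibiting the central idempotent supported on one summand when $|I|\ge 2$), which is a welcome addition; the proposal is correct.
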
 

\begin{proof} 

Suppose that $S$ is indecomposable.  Then by Proposition \ref{indec} there exists a subgroup $H$ of $G$ such that the partial action $\alpha$ is the extension by zero of the action of $H$  on $\Lambda$. Thus by  Theorem \ref{idemc2}  the algebra $\amalg_{g\in T}\Lambda_g$, where $\Lambda_g=\Lambda$  and $T=\{g_i:i\in I\}$ a transversal of $G/H$,  is  the enveloping action of $(\Lambda, \alpha)$.  Hence, by (\cite{DE}, Theorem 4.5) we have that  $\amalg_{g\in T} \Lambda_g \simeq S$. 
 So,  $G/H$ is trivial and we have that the partial action $\alpha$ is a global action.

The other implication is clear. \end{proof}

Now, we give an example to illustrate our Theorem \ref{idemc2}.

\begin{example} Let $\Lambda$ be the $K$-algebra, where $K$ is a field,  given by the so called the two way quiver. 

\begin{center} $\begin{array}{cccc}
 & \stackrel{\alpha}{\rightarrow} & \cr
\circ 1 & & \circ 2\cr
   & \stackrel{\beta}{\leftarrow} & \cr
\end{array}$. \end{center}

 We define an action of $\mathbb{Z}_2=\{1,\sigma\}$ on $\Lambda$ as follows:

$\left\{\begin{array}{c}
 1\rightarrow 2 \\ 
 2\rightarrow 1
 \end{array} \right.$ and $\left\{\begin{array}{c}
 \alpha \rightarrow \beta\\
 \beta \rightarrow \alpha\end{array}\right. $.

 Note that  $\mathbb{Z}_2$ is isomorphic to  a subgroup of order of 2 of $S_3$ and we can consider in this case  the subgroup $L=\{(1), (12)\}$ of order 2 of $S_3$. We have the left cosets  $\{(1), (1,2)\}$, $\{(2,3), (1,3,2)\}$ and $\{(1,3), (1,2,3)\}$  where the set of representatives of left cosets modulo $L$ is $\{\bar{1}, \bar{(2,3)}, \bar{(1,3)}\}$. Thus, we have the following table where we compute the functions $h$ and $j$ defined as before:  
 
$ \begin{tabular}{|c|c|c|}
 \hline 
   & j & h \\ 
 \hline 
 (1,1) & 1 & 1 \\ 
 \hline 
 (1,(23)) & (23) & 1 \\ 
 \hline 
 (1,(13)) & (13) & 1 \\ 
 \hline 
 ((12),1) & (13) & (12) \\ 
 \hline 
 ((12),(23)) & (13) & (12) \\ 
 \hline 
 ((23),1) & (23) & (23) \\ 
 \hline 
 ((23),(23)) & 1 & 1 \\ 
 \hline 
 ((23),(13)) & (13) & (12) \\ 
 \hline 
 ((123),1) & (13) & (12) \\ 
 \hline 
 ((123),(23)) & 1 & (12) \\ 
 \hline 
 ((123),(13)) & (23) & 1 \\ 
 \hline 
 ((13),1) & (13) & 1 \\ 
 \hline 
 ((13),(23)) & (23) & (12) \\ 
 \hline 
 ((13),(13)) & 1 & 1 \\ 
 \hline 
 ((132),1) & (23) & (12) \\ 
 \hline 
 ((132),(23)) & (13) & 1 \\ 
 \hline 
 ((132),(13)) & 1 & (12) \\ 
 \hline 
 \end{tabular}$.
\vspace{.4cm}
 
  By Theorem \ref{idemc2} we have that the enveloping action is  $(T,\beta)$, where  $T=\Lambda_{1}\times \Lambda_{(13)}\times \Lambda_{23}=\Lambda \times \Lambda \times \Lambda$ and the  global action $\beta$ of $S_3$ is defined  as follows:
\begin{itemize} 
\item $\beta_{1}=id_{T}$

\item $\beta_{(12)}(x,y,z)=((12)x,(12)z,(12)y)$

\item $\beta_{(13)}(x,y,z)=(y,x,(12)z)$

\item $\beta_{(123)}(x,y,z)=((12)z,(12)x,y)$

\item $\beta_{(23)}(x,y,z)=(z,(12)y,x)$

\item $\beta_{(132)}(x,y,z)=((12)y,z,(12)x)$
\end{itemize} 
  \end{example}

Next, we state  a proposition whose proof we give for the sake of completeness.

\begin{proposition}

Let $\Lambda_1$,...,$\Lambda_k$ be unital  indecomposable algebras  such that $\Lambda_i$ is not isomorphic to $\Lambda_j$ for $i\neq j$,   $\Lambda =\Lambda_1^{n_1} \times ... \times\Lambda_k^{n_k}$ and   $\alpha$  a partial action of a group $G$ on $\Lambda$ with an enveloping action. Then  each component $\Lambda_i^{n_i}$ is $\alpha$-invariant and  the restriction of $\alpha$ to the component $\Lambda_i^{n_i}$ defines a partial action on it and the original partial action is a product of these partial actions. Conversely given a set of partial actions $(\Lambda_i^{n_i}, \alpha_i)$ we can define the  partial action  $\alpha$ of $G$ on $\Lambda$, as the product of the given partial actions. 
\end{proposition}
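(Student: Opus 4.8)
The plan is to establish the forward direction by analysing how the isomorphisms $\alpha_g$ act on the primitive central idempotents of $\Lambda$, and then to dispose of the converse by a routine componentwise check.

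First I would record the structure of the central idempotents. Since $\Lambda=\Lambda_1^{n_1}\times\cdots\times\Lambda_k^{n_k}$ with each $\Lambda_i$ indecomposable, the primitive central idempotents of $\Lambda$ are precisely the elements $f_{ij}$ (for $1\le i\le k$, $1\le j\le n_i$) that are the identity of the $j$-th copy of $\Lambda_i$ and zero elsewhere, and every central idempotent is the sum of the $f_{ij}$ lying in some subset. Because $\alpha$ has an enveloping action, Theorem \ref{enveloping action} gives that each ideal $S_g$ equals $\Lambda 1_g$ for a central idempotent $1_g$; hence each $S_g$ is a unital algebra (with identity $1_g$) that is a product of finitely many of the blocks $\Lambda f_{ij}$, and its primitive central idempotents are exactly those $f_{ij}$ with $f_{ij}\le 1_g$.

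The key step is to show that each $\alpha_g$ preserves the isomorphism type of the blocks. As $\alpha_g\colon S_{g^{-1}}\to S_g$ is an isomorphism of unital $R$-algebras, it sends $1_{g^{-1}}$ to $1_g$ and carries primitive central idempotents bijectively to primitive central idempotents; so for each $f_{ij}\le 1_{g^{-1}}$ we have $\alpha_g(f_{ij})=f_{i'j'}$ for some $i',j'$, and $\alpha_g$ restricts to an algebra isomorphism $\Lambda f_{ij}\to\Lambda f_{i'j'}$. Since $\Lambda f_{ij}\cong\Lambda_i$ and $\Lambda f_{i'j'}\cong\Lambda_{i'}$, the hypothesis that $\Lambda_i\not\cong\Lambda_{i'}$ whenever $i\ne i'$ forces $i'=i$. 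Consequently, writing $e_i=\sum_{j} f_{ij}$ for the block idempotent with $\Lambda e_i=\Lambda_i^{n_i}$, we obtain $\alpha_g(S_{g^{-1}}\cap\Lambda_i^{n_i})=\alpha_g(S_{g^{-1}}e_i)=S_g e_i=S_g\cap\Lambda_i^{n_i}$, which is exactly the assertion that $\Lambda_i^{n_i}$ is $\alpha$-invariant. This preservation of type is the only real obstacle; everything else is bookkeeping.

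Granting this, I would set $S^{(i)}_g=S_g\cap\Lambda_i^{n_i}$ and $\alpha^{(i)}_g=\alpha_g|_{S^{(i)}_{g^{-1}}}$ and check that $\alpha^{(i)}=\{\alpha^{(i)}_g\}$ is a partial action on $\Lambda_i^{n_i}$: axiom (i) is immediate from $S_e=\Lambda$, and axioms (ii) and (iii) of Definition \ref{def1} are inherited from those for $\alpha$ by intersecting with $\Lambda_i^{n_i}$ (equivalently, multiplying by the central idempotent $e_i$), using the type preservation just proved to see that the images stay inside $\Lambda_i^{n_i}$. Since $S_g=\bigoplus_i S^{(i)}_g$ and $\alpha_g=\bigoplus_i\alpha^{(i)}_g$ under the decomposition $\Lambda=\bigoplus_i\Lambda_i^{n_i}$, the original partial action is the product of the $\alpha^{(i)}$. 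For the converse, given partial actions $(\Lambda_i^{n_i},\alpha_i)$ I would simply define $S_g=\bigoplus_i S^{(i)}_g$ and $\alpha_g=\bigoplus_i\alpha^{(i)}_g$; each $S_g$ is an ideal of $\Lambda$, each $\alpha_g$ is an algebra isomorphism, and the three axioms of Definition \ref{def1} hold because they hold in each coordinate, so $\alpha$ is a partial action of $G$ on $\Lambda$.
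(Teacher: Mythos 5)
Your proof is correct and takes essentially the same route as the paper's: both hinge on the observation that, since $\Lambda_i\not\cong\Lambda_j$ for $i\neq j$, an ideal generated by a central idempotent inside $\Lambda_i^{n_i}$ cannot be isomorphic to one inside $\Lambda_j^{n_j}$, so each $\alpha_g$ must preserve the components. Your version, organized around primitive central idempotents rather than arbitrary idempotent-generated ideals, is if anything slightly more detailed than the paper's.
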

\begin{proof}
  Note that given a central idempotent $e$ of  $\Lambda$, and a decomposition  $e=e_1 + \cdots + e_k$  with $e_i \in \Lambda_i^{n_i}$  and it follows that each  $e_i$ is a central 
idempotent of $\Lambda_i$. All central idempotents of $\Lambda$ are of this form.   We easily have for  each ideal $I$ of $\Lambda$, that  is generated by a central idempotent, is of the form   $I_1\times...\times  I_ k$ where $I_t = \Lambda e_t= \Lambda^{n_t}_te_t$, with $e_t$ a central idempotent in $\Lambda^{n_t}_t$, for each $t\in \{1,...,k\}$.

Now, let $I_i$ be an ideal of $\Lambda_i^{n_i}$ and $I_j$ an ideal of  $\Lambda_{j}^{n_j}$   generated by  central idempotents with $i\neq j$,  then there is no isomorphism between $I_i$ and $I_j$ because of $\Lambda_i$ is not isomorphic to $\Lambda_j$. Thus, $\alpha_g(\Lambda_i^{n_i}\cap S_{g^{-1}})\subseteq \Lambda_i^{n_i}\cap S_g$, for each $g\in G$ and it follows that $\Lambda_i^{n_i}$ is $\alpha$-invariant.  Hence, for each $i\in \{1,..,k\}$,  we have a partial action $\alpha_i$ of $G$ on $\Lambda_i^{n_i}$ and we have that the partial action $\alpha$ is of the form $(\amalg_{i=1}^k \Lambda_i^{n_i}, \amalg_{i=1}^{k} \alpha_i)$.

For the converse, we observe that if an algebra $A$ is of the form $A=B\times C$,  then the product of a partial action $\alpha_1$   of $G$ on $B$  and a partial action $\alpha_2$ of $G$ on $C$ is a partial action of $G$ on $A$.

\end{proof}   

The former proposition tells us that in order to describe all the partial actions on an algebra $\Lambda$, we only need to describe the partial actions on algebras of the form 
$\Lambda^n$, where $\Lambda$ is an unital indecomposable algebra.

  The following proposition, goes in this direction and it is probably well known. We give a proof here, for the sake of completeness.

\begin{proposition}

Let $B=\Lambda^n$ where $\Lambda$ is an unital idecomposable algebra. Then
$Aut(B)\simeq S_n\times Aut(\Lambda)^n$, where $S_n$ is the group of permutations on $n$ elements, $Aut(\Lambda)$ is the group of automorphisms of $\Lambda$ and  $Aut(B)$ the group of automorphims of $B$. 
\end{proposition}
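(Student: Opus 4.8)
The plan is to construct an explicit isomorphism from the semidirect-product-like group $S_n \times \mathrm{Aut}(\Lambda)^n$ to $\mathrm{Aut}(B)$, show it is a well-defined group homomorphism, and then verify bijectivity by exhibiting the inverse construction. The starting point is the observation (already used in the preceding Proposition) that the central idempotents of $B = \Lambda^n$ are exactly the sums of distinct primitive central idempotents $e_1,\dots,e_n$, where $e_i$ is the identity in the $i$-th factor and zero elsewhere. Since $\Lambda$ is indecomposable, these $e_i$ are the minimal nonzero central idempotents and hence are permuted by any automorphism.

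First I would define the map $\Phi : S_n \times \mathrm{Aut}(\Lambda)^n \to \mathrm{Aut}(B)$ by sending $(\sigma, (\phi_1,\dots,\phi_n))$ to the automorphism $\Psi$ given on elements by
\[
\Psi(x_1,\dots,x_n) = \big(\phi_{1}(x_{\sigma^{-1}(1)}),\, \phi_{2}(x_{\sigma^{-1}(2)}),\,\dots,\,\phi_{n}(x_{\sigma^{-1}(n)})\big).
\]
I would check directly that $\Psi$ is an $R$-algebra automorphism: it is additive and $R$-linear componentwise, and it is multiplicative because the product in $B$ is componentwise, so permuting and applying the $\phi_i$ respects it. Its inverse is obtained from $(\sigma^{-1},(\phi_1^{-1},\dots,\phi_n^{-1}))$ after the appropriate relabelling, so $\Psi \in \mathrm{Aut}(B)$. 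The next step is to verify that $\Phi$ is a homomorphism for the direct-product group structure on the source; the composition of two such maps reshuffles indices and composes the factor automorphisms, and tracking the indices carefully shows the group law matches. (Strictly, because the permutation acts on the $\phi$-coordinates, the natural group structure making $\Phi$ a homomorphism is the semidirect product $S_n \ltimes \mathrm{Aut}(\Lambda)^n$; since as a set this coincides with $S_n \times \mathrm{Aut}(\Lambda)^n$, I would remark that the statement is read with this group operation, or equivalently conjugate the indexing so that the abstract group is the direct product.)

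The heart of the argument is surjectivity. Given any $\Psi \in \mathrm{Aut}(B)$, since $\Psi$ permutes the primitive central idempotents $\{e_i\}$, there is a unique $\sigma \in S_n$ with $\Psi(e_i) = e_{\sigma(i)}$. This forces $\Psi$ to carry the $i$-th factor $\Lambda e_i \cong \Lambda$ isomorphically onto the $\sigma(i)$-th factor $\Lambda e_{\sigma(i)}$. Restricting $\Psi$ to the $i$-th factor and identifying both sides with $\Lambda$ therefore yields an automorphism $\phi_i \in \mathrm{Aut}(\Lambda)$, and I would check that $\Psi = \Phi(\sigma,(\phi_1,\dots,\phi_n))$. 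Injectivity is immediate, since the data $\sigma$ and the $\phi_i$ are recovered uniquely from the action on idempotents and on factors.

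The main obstacle is the bookkeeping: making sure the index placement in the definition of $\Phi$ matches the action on idempotents recovered in the surjectivity step, and confirming that the resulting group operation is indeed what the statement asserts. The only genuinely substantive point is that an automorphism must permute the primitive central idempotents — this uses the indecomposability of $\Lambda$ (so that each $\Lambda e_i$ has no nontrivial central idempotents and cannot be split further) together with the fact that an algebra automorphism preserves the lattice of central idempotents. Everything else is a routine, if index-heavy, verification.
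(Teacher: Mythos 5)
Your construction is essentially the paper's: the authors define the same map $\psi(\eta,f_1,\dots,f_n)(\sum\lambda_ie_i)=\sum f_i(\lambda_i)e_{\eta(i)}$ and simply assert it is a group isomorphism, whereas you additionally supply the verification (surjectivity via the fact that an automorphism must permute the primitive central idempotents, which uses indecomposability of $\Lambda$). More importantly, your parenthetical caveat identifies a real defect in the statement as written: composing two such maps gives $(\sigma,\phi)(\tau,\rho)=(\sigma\tau,(\phi_i\circ\rho_{\sigma^{-1}(i)})_i)$, so the group structure transported to $S_n\times \mathrm{Aut}(\Lambda)^n$ is the wreath product $\mathrm{Aut}(\Lambda)\wr S_n$, not the direct product; e.g.\ for $\Lambda=\mathbb{C}$ over $R=\mathbb{R}$ and $n=2$ one gets the dihedral group of order $8$ rather than $(\mathbb{Z}/2)^3$, so the direct-product claim is genuinely false and not just a matter of bookkeeping. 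Your primary fix (read the statement with the semidirect-product operation) is the correct one; your alternative suggestion that one could ``conjugate the indexing so that the abstract group is the direct product'' should be dropped, since the two groups are not isomorphic in general. Apart from that one sentence, your argument is complete and in fact more careful than the paper's.
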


\begin{proof}
Define the map $\psi:S_n\times Aut(\Lambda)^n \to Aut(B)$ by 
$\psi(\eta,f_1,\dots f_n)(\sum \lambda_i e_i) = \sum f_i(\lambda_i)e_{\eta(i)}$. We easily have that $\psi$  is a group isomorphism.
\end{proof}

The next theorem shows that  if a partial action $\alpha$ of a  group $G$ on  $\Lambda^n$, with $\Lambda$ an unital  indecomposable algebra has an enveloping action  $(L, \beta)$ then $L=\oplus_{i\in I}\Lambda_i$, where $\Lambda_i=\Lambda$ with $\sharp I$ is given in the proof.   

\begin{theorem}
Let  $G$ be any group, $\Lambda$  an unital indecomposable algebra and  $(\Lambda^n, \alpha)$  a partial action with an enveloping action $(S,\psi)$.
 Then  the set of the primitive  central idempotents $Y=\{e_1,\dots, e_n\}$ of $\Lambda^n$ is $\alpha$-invariant.  Moreover if $X$ is the enveloping set of the induced partial action  on $Y$, then  $S\simeq \amalg_{i\in I} \Lambda_i$, where $\Lambda_i=\Lambda$, for all $i\in I$ and $I$ is an  index set  such that $\sharp I=\sharp X$ and  in this case we have that $\sharp I\leq  n|G|$, because of  $\sharp X\leq  n|G|$.
 \end{theorem}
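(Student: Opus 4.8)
The plan is to analyze the given enveloping algebra $S$ through the idempotents $\psi_g(e_i)$ and to match these bijectively with the set-theoretic globalization $X$, rather than to reconstruct $S$ from scratch. Throughout I write $D_g$ for the ideals of the partial action $\alpha$, so that $D_g=\Lambda^n\cap\psi_g(\Lambda^n)$, and $Y_g=Y\cap D_g$.

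First I would establish the $\alpha$-invariance of $Y$. Since $\Lambda$ is indecomposable, the primitive central idempotents of $\Lambda^n$ are exactly $e_1,\dots,e_n$, and each domain $D_{g^{-1}}$ is a product of some of the factors $\Lambda e_i$; hence its primitive central idempotents are precisely the $e_i$ it contains. As $\alpha_g\colon D_{g^{-1}}\to D_g$ is a ring isomorphism it carries primitive central idempotents to primitive central idempotents, so $\alpha_g$ maps $Y_{g^{-1}}$ bijectively onto $Y_g\subseteq Y$. This gives the invariance and, at the same time, a partial action of $G$ on the finite set $Y$ (domains $Y_g$, bijections $\alpha_g|$). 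Its enveloping set $X$ (as in the statement) satisfies $X=G\cdot Y$ and is a quotient of $G\times Y$, whence $\sharp X\le\sharp G\,\sharp Y=n|G|$.

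Next I introduce $f_{g,i}=\psi_g(e_i)$. Note $\mathbf 1=e_1+\cdots+e_n$ is the identity of the unital ideal $\Lambda^n$, hence is central in $S$, and $\Lambda e_i=e_iSe_i\cong\Lambda$; applying the automorphism $\psi_g$ gives $\psi_g(\Lambda e_i)=f_{g,i}Sf_{g,i}\cong\Lambda$. Since $S=\sum_{g}\psi_g(\Lambda^n)=\sum_{g,i}f_{g,i}Sf_{g,i}$, these corners span $S$. I would then show that $[(g,e_i)]\mapsto f_{g,i}$ is a well-defined bijection from $X$ onto $\{f_{g,i}\}$: well-definedness follows from $f_{gh,e_i}=\psi_g(\psi_h(e_i))=\psi_g(\alpha_h(e_i))=f_{g,\alpha_h(e_i)}$ when $e_i\in Y_{h^{-1}}$, using $\psi_h|_{D_{h^{-1}}}=\alpha_h$; injectivity follows because $\psi_k(e_i)=e_j\in\Lambda^n$ forces $e_i\in D_{k^{-1}}\cap\Lambda^n=D_{k^{-1}}$ and $\alpha_k(e_i)=e_j$, i.e. $[(k,e_i)]=[(e,e_j)]$. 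Writing $\{f_x\}_{x\in X}$ for this family, one gets $S=\sum_{x\in X}f_xSf_x$ with each $f_xSf_x\cong\Lambda$.

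The crux, and the step I expect to be the main obstacle, is to prove that the $f_x$ are pairwise orthogonal. Applying a suitable $\psi_{g^{-1}}$ it suffices to show $e_i\,\psi_k(e_j)=0$ whenever $[(e,e_i)]\ne[(k,e_j)]$. Such a product lies in $D_k=\Lambda^n\cap\psi_k(\Lambda^n)$ and in $\Lambda e_i$, so it vanishes unless $e_i\in Y_k$; and when $e_i\in Y_k$, applying $\psi_{k^{-1}}$ together with the $Y$-invariance of $\alpha_{k^{-1}}$ rewrites it as $e_pe_j$ with $e_p=\alpha_{k^{-1}}(e_i)$, which is nonzero only if $p=j$, i.e. $e_i=\psi_k(e_j)$ — precisely the excluded case. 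The delicate point here is handling $\psi_k(e_j)\notin\Lambda^n$. Once orthogonality is established, the corners multiply to zero, the spanning sum becomes direct, and squeezing any $s\in S$ by $f_y$ on both sides (writing $s=\sum_x a_x$ with $a_x\in f_xSf_x$) yields $f_ys=a_y=sf_y$, so each $f_x$ is central in $S$. Hence $S=\amalg_{x\in X}f_xSf_x$ is a direct product of rings, each isomorphic to $\Lambda$, giving $S\simeq\amalg_{i\in I}\Lambda_i$ with $\Lambda_i=\Lambda$, $\sharp I=\sharp X$, and $\sharp I\le n|G|$, as claimed.
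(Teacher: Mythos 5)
Your proof is correct, but it travels in the opposite direction from the paper's. The paper \emph{constructs} a candidate globalization: it takes the set-theoretic enveloping action $(X,\theta)$ of the induced partial action on $Y$, forms $L=\amalg_{i\in I}\Lambda_i$ with $\sharp I=\sharp X$, defines a global action $\beta_g(\sum\lambda_ie_i)=\sum\psi_g(\lambda_i)\theta_g(e_i)$ on $L$, checks (rather briefly) that $(L,\beta)$ restricts to $\alpha$ on $\Lambda^n$, and then invokes the uniqueness of enveloping actions (Theorem \ref{enveloping action}) to conclude $S\simeq L$. You instead work \emph{inside} the given $(S,\psi)$: you produce the family $f_{g,i}=\psi_g(e_i)$, show it is parametrized bijectively by $X$, prove the $f_x$ are pairwise orthogonal central idempotents whose corners $f_xSf_x\cong\Lambda$ span $S$, and read off the block decomposition $S=\amalg_{x\in X}f_xSf_x$ directly. (The opening step on $\alpha$-invariance of $Y$ is essentially the paper's argument, phrased via preservation of primitive central idempotents under the ring isomorphisms $\alpha_g$.) What each route buys: the paper's argument is shorter because the uniqueness theorem does the heavy lifting, but it leaves the verification that $(L,\beta)$ satisfies all four conditions of Definition \ref{def2} largely implicit; your argument costs more (the orthogonality computation $e_i\,\psi_k(e_j)=0$ for $[(e,e_i)]\neq[(k,e_j)]$ is the real content, and you handle the delicate case $\psi_k(e_j)\notin\Lambda^n$ correctly by locating the product in $\Lambda e_i\cap D_k$), but it is self-contained, avoids the uniqueness theorem entirely, and exhibits the isomorphism explicitly as an internal direct sum of blocks indexed by $X$ — which also makes the count $\sharp I=\sharp X\leq n|G|$ immediate. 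Both proofs are sound; yours is the more transparent of the two.
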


\begin{proof}  First, we show that $Y$ is $\alpha$-invariant. In fact, let $g\in G$ and $A=Y\cap D_{g^ {-1}}$. If $D_{g^ {-1}}$ is the zero ideal, then $\alpha_g(Y\cap D_{g^ {-1}})=Y\cap D_g$.  Suppose that $D_{g^ {-1}}\neq 0$ and in this case $Y\cap D_{g^ {-1}}\neq \emptyset$, since $D_{g^ {-1}}$ is a direct sum of ideals of $\Lambda^n$.  Thus,  $1_{g^{-1}}=e_{i_1}+...+e_{i_s}$ and we clearly have  that $e_{i_1},...,e_{i_s}\in D_{g^ {-1}}$.  Hence, $1_g=\alpha_g(e_{i_1})+...+\alpha_g(e_{i_s})$. By the fact that $1_g=e_{j_1}+...+e_{j_k}$ and the elements of the set $\{\alpha_g(e_{i_1}),...,\alpha_g(e_{i_s}\}$ are primirive central idempotents of $D_g$ we have that $k=s$ and $\alpha_g(e_{i_p})=e_{j_k}$. Consequently, $\alpha_g(e_{i_p})$ is a primitive central idempotent of $\Lambda^ n$, for all $p\in \{1,..,n\}$. So, $\alpha_g(Y\cap D_{g^ {-1}})\subseteq Y\cap D_g$ and it follows that  $Y$ is $\alpha$-invariant.      

We define the partial action  $\gamma$ of $G$ on $\{e_1,\dots, e_n\}$  by the restriction of the partial action $\alpha$ to the set  $\{e_1,\dots, e_n\}$.

Next, we consider   $(X,\theta)$ be the enveloping action of the partial action $\gamma$ on $Y=\{e_1,\dots, e_n\}$, where  $X$ is the $\Psi$-orbit of $Y$ and $\theta$ is the restriction  of  $\beta$ to $X$   We consider $L=\amalg_{i\in I} \Lambda_i$, where $\Lambda_i=\Lambda$, for all $i\in I$ and $I$ is an  index set such that $\sharp I=\sharp X$. It is well know that there exists an injective homomorphism  $i: \Lambda^{n}\rightarrow \amalg_{i\in I} \Lambda_i$.
  
We define a global action $\beta$ of   $G$ on $L$  by $\beta_g(\sum \lambda_i e_i)=\sum\psi_g(\lambda_i)\theta_g(e_i)$, where $\Psi_g(\lambda_i)=\Psi_g(\lambda_i e_i)$.  Note that $\beta|_{\{e_1,\dots, e_n\}}$ gives the partial action $\gamma$. Then the restriction of the action  $\beta$ to $\Lambda^n$ is the  partial action $\alpha$ and it follows that $(L,\beta)$ is the  enveloping action of the $(\Lambda^n, \alpha)$. So, by (\cite{DE}, Theorem 4.5) we have that   $S\simeq \amalg_{i\in I} \Lambda_i$. 

Moreover, all the partial actions which restricts to $\gamma$ are of this form. \end{proof}

As a consequence of the last theorem we have the following result.

   \begin{corollary}
	Let $\alpha$ and $\alpha'$ be globalizable partial actions of  $G$ on $\Lambda^n$ where $\Lambda$ is an unital indecomposable algebra,  $ ( \Lambda^m, \beta)$ and
	$( \Lambda^{m'}, \beta')$  the respective enveloping actions. Assume that both partial actions induce the same partial action on the set of primitive central idempotents  $\{e_1,...,e_n\}$. Then $m=m'$.
	\end{corollary}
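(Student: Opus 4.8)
The plan is to derive the equality $m = m'$ directly from the preceding theorem, which expresses the size of the enveloping algebra in terms of the induced partial action on the primitive central idempotents. The key observation is that the previous theorem tells us that for a globalizable partial action $\alpha$ on $\Lambda^n$ with enveloping action $(\Lambda^m, \beta)$, the integer $m$ equals $\sharp X$, where $X$ is the enveloping set of the induced partial action $\gamma$ on the set $Y = \{e_1, \dots, e_n\}$ of primitive central idempotents. Since the enveloping action of a partial action on a set is unique up to equivalence (in particular, its cardinality is an invariant), $m$ is completely determined by the induced partial action $\gamma$ on $Y$.

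First I would invoke the theorem to both partial actions: applying it to $\alpha$ gives $m = \sharp X$ where $X$ is the enveloping set of the induced partial action $\gamma$ on $\{e_1, \dots, e_n\}$, and applying it to $\alpha'$ gives $m' = \sharp X'$ where $X'$ is the enveloping set of the induced partial action $\gamma'$ on the same set. By hypothesis the two partial actions induce the \emph{same} partial action on $\{e_1, \dots, e_n\}$, that is $\gamma = \gamma'$. Therefore they have the same enveloping set, so $X = X'$ (or at least $\sharp X = \sharp X'$ by uniqueness up to equivalence of enveloping actions), whence $m = \sharp X = \sharp X' = m'$.

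The main point to be careful about is the uniqueness statement: I must argue that the enveloping action of a partial action on a set is unique up to equivalence, so that its cardinality is well defined and depends only on the partial action $\gamma$ itself, not on any auxiliary choices. This is the set-theoretic analogue of the uniqueness part of Theorem \ref{enveloping action} (\cite{DE}, Theorem 4.5), and for partial actions on sets the enveloping set can be taken concretely as the orbit set, which makes the cardinality manifestly an invariant of $\gamma$. I expect this uniqueness/invariance step to be the only substantive point; once it is in hand, the conclusion $m = m'$ is immediate from the fact that $\gamma = \gamma'$. The rest of the argument is a direct substitution into the formula $m = \sharp X$ provided by the theorem.
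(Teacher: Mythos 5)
Your proposal is correct and is exactly the argument the paper intends: the corollary is stated as an immediate consequence of the preceding theorem, whose conclusion $m=\sharp X$ with $X$ the enveloping set of the induced partial action on $\{e_1,\dots,e_n\}$ makes $m$ an invariant of that induced action, so equal induced actions force $m=m'$. Your extra care about uniqueness of the enveloping set is a reasonable (and harmless) elaboration of what the paper leaves implicit.
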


In the literature there are a lot of examples  of partial actions of groups on algebras of type $\Lambda=\amalg_{i=1}^nKe_i$, where $\{e_i:1\leq i\leq n\}$ is the set of primitive central idempotents and $K$ is a field, see \cite{CCF}, \cite{MWM} and the references therein.  We study during the rest of this section   the enveloping actions of the partial  actions of  any group $G$ on $\Lambda=\amalg_{i=1}^nKe_i$, where $K$ is a field and all partial automorphisms associated to the partial action are $K$-linear.

We note that for any ideal $I$ of the algebra $\Lambda=\amalg_{i=1}^nKe_i$ there is a subset $T$ of $\{e_i: 1\leq i\leq n\}$ such that 
 $I=\amalg_{e\in T}Ke$.  Let $X$ be the set of ideals of $\Lambda$. Then there is a bijection  $\psi:X\rightarrow \mathcal{P}(\{e_i:1\leq i\leq n\})$ defined by $\psi(0)=\emptyset$ and for $I\neq 0$, $\psi(I)=\{e_i:e_iI\neq 0\}$. 
 
 The following lemma is well known and we include it here for the sake of completeness.

\begin{lemma}\label{bijection} Let $\Lambda$ as above and $I_1$, $I_2$ ideals of $\Lambda$. Then $I_1\cong I_2$ if and only if $\sharp \psi( I_1)=\sharp \psi(I_2)$.

%where $\sharp \ \psiI_j$, for $j=1,2$, means the number of the central primitive idempotents $e$ such that $Ie_j=I$.

\end{lemma}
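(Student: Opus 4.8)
The plan is to prove both directions of the iff by relating isomorphism of the ideals $I_1, I_2$ directly to the cardinalities of the corresponding subsets of primitive idempotents. The key structural fact is that each ideal $I$ of $\Lambda = \amalg_{i=1}^n Ke_i$ has the form $I = \amalg_{e \in T} Ke$ for a subset $T \subseteq \{e_1,\dots,e_n\}$, and that $\psi(I) = T$ so that $\sharp\psi(I) = \sharp T = \dim_K I$. This last observation is the crux: the cardinality $\sharp\psi(I)$ is nothing but the $K$-dimension of the ideal $I$, since $I$ is a free $K$-module with basis $T$.

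First I would establish the forward direction. Suppose $I_1 \cong I_2$ as (non-unital) $K$-algebras, or at least as $K$-modules via a $K$-linear isomorphism. Since each $I_j$ is a finite-dimensional $K$-vector space with $\dim_K I_j = \sharp\psi(I_j)$, any $K$-linear isomorphism forces $\dim_K I_1 = \dim_K I_2$, hence $\sharp\psi(I_1) = \sharp\psi(I_2)$. For the converse, suppose $\sharp\psi(I_1) = \sharp\psi(I_2)$, say both equal $s$. Write $\psi(I_1) = \{e_{i_1},\dots,e_{i_s}\}$ and $\psi(I_2) = \{e_{j_1},\dots,e_{j_s}\}$. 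I would define a map $\phi: I_1 \to I_2$ on the basis by $e_{i_t} \mapsto e_{j_t}$ for $t = 1,\dots,s$ and extend $K$-linearly. Because each $Ke_{i_t} \cong K \cong Ke_{j_t}$ as algebras (all summands are copies of the base field with $e_{i_t}$ acting as a unit of its block), and the idempotents are orthogonal, $\phi$ respects multiplication: $e_{i_t}e_{i_{t'}} = \delta_{tt'}e_{i_t}$ maps to $e_{j_t}e_{j_{t'}} = \delta_{tt'}e_{j_t}$. Thus $\phi$ is an algebra isomorphism, giving $I_1 \cong I_2$.

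The main obstacle, such as it is, lies in being precise about what notion of isomorphism is intended and in verifying that $\phi$ genuinely preserves the algebra structure rather than merely the linear structure. The asymmetry between the two directions is worth noting: the forward direction needs only $K$-linear isomorphism (so it holds for the weakest reasonable reading of $\cong$), whereas to \emph{produce} an isomorphism in the converse I must exploit the very rigid structure of $\Lambda$, namely that it is a product of copies of the field $K$ with orthogonal primitive idempotents. I would make explicit that the multiplication on each $I_j$ is completely determined by the orthogonality relations among its idempotents, so that the combinatorial bijection between the index sets of equal size automatically upgrades to an algebra isomorphism. Once this is spelled out, both implications are immediate and the lemma follows.
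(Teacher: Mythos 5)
Your proposal is correct and follows essentially the same route as the paper: the converse is handled by choosing a bijection between the two idempotent sets and extending it $K$-linearly to an algebra isomorphism, while the forward direction is a dimension count that the paper dismisses as trivial. Your write-up is somewhat more careful (you verify multiplicativity via orthogonality of the idempotents, which the paper leaves implicit), but there is no substantive difference in method.
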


\begin{proof} Suppose that $\sharp \psi(I_1)=\sharp \psi(I_2)$. Then we easily have that  there exists  a bijection $\theta:\psi(I_1)\rightarrow \psi(I_2)$.

% where $i(I_j)$, $j\in \{1,2\}$, is the set of primitive central idempotents $e$ of $\Lambda$ such that $I_je=I_j$. 

Thus,  the application  $\Psi:I_1\rightarrow I_2$ defined by \begin{center}$\Psi(\sum_{x\in i(I_1)}\lambda_x e_x)=\sum_{x\in i(I_1)}\lambda_{\theta(x)}e_{\theta(x)}$\end{center} is an isomorphim between $I_1$ and $I_2$.

The converse is trivial.

 \end{proof} 
 
 The next proposition was proved in (\cite{BDG}, Proposition 3.6) and we put it here in our context for the sake of completeness. Moreover, it is  convenient to point out  that   for a  partial action of $G$ on $\Lambda$ is  equivalent to give a partial action on the set $\{e_i:1\leq i\leq n\}$. 

\begin{proposition} Let $\Lambda=\amalg_{i=1}^n Ke_i$ and  $G$  a group. Then,   all  the partial actions of $G$ on $\Lambda$ can be restricted to the set $\{e_i\}_{i=1}^n$. Moreover, we obtain a bijection between the sets of  the partial actions of $G$ on $\Lambda$ and  partial actions of $G$ on $\{e_i\}_{i=1}^n$.\end{proposition}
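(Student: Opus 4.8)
The plan is to establish the bijection by constructing explicit maps in both directions and checking they are mutually inverse, using the fact (recorded just before the statement) that every ideal of $\Lambda=\amalg_{i=1}^n Ke_i$ is of the form $\amalg_{e\in T}Ke$ for a unique subset $T\subseteq\{e_i\}$. First I would make precise what a \emph{partial action on the set} $\{e_i\}_{i=1}^n$ should mean: a collection of subsets $T_g\subseteq\{e_i\}$ together with bijections $\sigma_g\colon T_{g^{-1}}\to T_g$ satisfying the set-theoretic analogues of Definition \ref{def1}, namely $T_e=\{e_i\}$, $\sigma_e=\mathrm{id}$, $T_{(gh)^{-1}}\supseteq \sigma_h^{-1}(T_h\cap T_{g^{-1}})$, and $\sigma_g\circ\sigma_h=\sigma_{gh}$ on the appropriate domain.

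The forward direction is the restriction construction. Given a partial action $\alpha=\{\alpha_g\colon S_{g^{-1}}\to S_g\}$ of $G$ on $\Lambda$, I would set $T_g=\psi(S_g)\subseteq\{e_i\}$, the subset of primitive idempotents supporting $S_g$. The key point is that each $\alpha_g$, being a $K$-algebra isomorphism between the ideals $S_{g^{-1}}$ and $S_g$, must permute primitive central idempotents: a primitive central idempotent of $S_{g^{-1}}$ is sent to a primitive central idempotent of $S_g$, and since both ideals have the form $\amalg Ke$ their primitive central idempotents are exactly the elements of $T_{g^{-1}}$ and $T_g$. This yields a well-defined bijection $\sigma_g\colon T_{g^{-1}}\to T_g$ by $\sigma_g(e)=\alpha_g(e)$. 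The four defining conditions of the partial action on the set then follow directly from the corresponding conditions on $\alpha$, using that $\psi$ turns intersections of ideals into intersections of subsets and that $\alpha_g(S_{g^{-1}}\cap S_h)=S_g\cap S_{gh}$ (the identity noted after Definition \ref{def1}).

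The reverse direction rebuilds an algebraic partial action from a set-theoretic one and is where the bijectivity must be pinned down. Given $(\{T_g\},\{\sigma_g\})$, I would define $S_g=\amalg_{e\in T_g}Ke$ and let $\alpha_g\colon S_{g^{-1}}\to S_g$ be the unique $K$-linear map sending $e\mapsto\sigma_g(e)$ for $e\in T_{g^{-1}}$; since it maps a basis of orthogonal idempotents to a basis of orthogonal idempotents and respects the multiplication $e_ie_j=\delta_{ij}e_i$, it is a $K$-algebra isomorphism of ideals. Again the axioms transfer. Finally I would verify that the two constructions are inverse: starting from $\alpha$, restricting to the $e_i$, and re-extending returns $\alpha$ because a $K$-linear ideal isomorphism of $\Lambda$ is completely determined by its values on the primitive idempotents; and the round trip in the other order is immediate since $\psi$ and its inverse $T\mapsto\amalg_{e\in T}Ke$ are mutually inverse bijections between ideals and subsets.

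The main obstacle, and the step deserving the most care, is justifying that every $K$-algebra isomorphism $\alpha_g$ between two such ideals carries primitive central idempotents to primitive central idempotents and hence restricts to a genuine bijection $\sigma_g$; the rest is bookkeeping. This rests on the structural fact that in $\amalg_{e\in T}Ke$ the primitive central idempotents are precisely the generators $e$, so that any algebra isomorphism induces a bijection of these distinguished idempotents, and $K$-linearity guarantees the extension in the reverse direction is unique. Once this is secured, the verification of Definition \ref{def1}(i)--(iii) on each side and the inverse relation are routine and I would present them only briefly.
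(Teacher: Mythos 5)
Your proposal is correct, but there is nothing in the paper to compare it with: despite the phrase ``for the sake of completeness,'' the paper gives no proof of this proposition at all --- it only cites \cite{BDG}, Proposition 3.6, and moves on. So you have in effect supplied the argument the paper omits. Your construction is the natural one and it is sound: every ideal of $\Lambda=\amalg_{i=1}^n Ke_i$ is $\amalg_{e\in T}Ke$ for a unique $T$, any algebra isomorphism between two such ideals carries primitive central idempotents to primitive central idempotents (these being exactly the generators $e$), and the axioms of Definition \ref{def1} translate verbatim into their set-theoretic counterparts because $\psi$ converts intersections of ideals into intersections of subsets. The one point that genuinely needs the standing hypothesis stated just before the proposition --- that all partial isomorphisms are $K$-linear --- is the injectivity of the restriction map: a mere ring isomorphism $Ke\to Ke'$ could twist by a field automorphism of $K$ and would not be determined by $e\mapsto e'$. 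You invoke $K$-linearity at exactly the right place (uniqueness of the extension and the round-trip argument), so the bijection is fully justified. The only cosmetic remark is that you had to define what a partial action on a finite set means, since the paper never does; your definition is the standard one and matches what the paper implicitly uses later when it speaks of the induced partial action on $\{e_1,\dots,e_n\}$ and its enveloping set.
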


%\begin{proof}  Let $\alpha=\{\alpha_g:I_{g^{-1}}\rightarrow I_{g}: g\in G\}$ be a partial action of $G$ on $\Lambda$. We define  $\overline{\alpha}=\{\overline{\alpha}_g:i(I_{g^{-1}})\rightarrow i(I_g)\}$. It is not difficult to show  that  $\overline{\alpha}$ is a partial action of $G$ on $\{e_i\}_{i=1}^{n}$. Now,  given any partial action of $G$ on  $\{e_i\}_{i=1}^{n}$ and using Lemma 3.13 we obtain a partial action of $G$ on $\Lambda$.  The result follows. \end{proof}

In (\cite{EDJ2}, Proposition 8.4)  the authors proved that the enveloping actions of twisted partial actions of groups  on  algebras decomposed by blocks are  algebras decomposed by blocks, if these  algebras are unital, but in our final main result, we do not need the assumption that the enveloping action has unit  to show that it is an  algebra decomposed by  blocks.  

\begin{theorem} Let $\Lambda=\amalg_{i=1}^nKe_i$  be as above and $\alpha$ a partial action of $G$ on $\Lambda$. Then the enveloping action is $B=\sum_{e\in X}Ke$, where $X$ is the enveloping action of the induced partial action $\overline{\alpha}$ of $\alpha$ on $\{e_i :1\leq i\leq n\}$.\end{theorem}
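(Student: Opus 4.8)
The plan is to exhibit an explicit candidate for the enveloping action and then close the argument by invoking the uniqueness part of Theorem \ref{enveloping action}. Observe first that every ideal of $\Lambda=\amalg_{i=1}^nKe_i$ is a sum of some of the $Ke_i$, hence generated by a central idempotent; so by Theorem \ref{enveloping action} the partial action $\alpha$ does admit an enveloping action and it is unique up to equivalence. Consequently it suffices to produce \emph{one} global action $(B,\beta)$ together with a monomorphism $\varphi$ satisfying the four items of Definition \ref{def2}. A second preliminary observation is that, since each $\alpha_g$ is a $K$-algebra isomorphism between ideals of $\Lambda$, it must carry primitive idempotents to primitive idempotents; thus, by the preceding proposition, $\alpha$ is completely encoded by the induced set-theoretic partial action $\overline{\alpha}$ of $G$ on $Y=\{e_1,\dots,e_n\}$, whose partial bijections are the restrictions of the $\alpha_g$ to the relevant subsets of idempotents.

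Next I would take $(X,\overline{\beta})$ to be the enveloping action of the set-partial-action $(Y,\overline{\alpha})$, which exists by the standard globalization of partial actions on sets: one has $Y\subseteq X$, a genuine $G$-action $\overline{\beta}$ on $X$, and the three defining properties $X=\bigcup_{g\in G}\overline{\beta}_g(Y)$, $\ Y_g=Y\cap\overline{\beta}_g(Y)$, and $\overline{\beta}_g|_{Y_{g^{-1}}}=\overline{\alpha}_g$, where $Y_g$ is the domain of $\overline{\alpha}$ corresponding to the support of $S_g$. I would then set $B=\sum_{e\in X}Ke$ (the coproduct of copies of $K$ indexed by $X$, with componentwise multiplication), let $\varphi:\Lambda\to B$ be the inclusion induced by $Y\subseteq X$, and define $\beta_g:B\to B$ as the $K$-linear extension of the permutation $e\mapsto\overline{\beta}_g(e)$ of the basis $X$. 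Since $\overline{\beta}$ is a global action and each $\beta_g$ plainly respects the product $e\cdot e'=\delta_{e,e'}e$, the family $\{\beta_g\}$ is a global action of $G$ on $B$ by $K$-algebra automorphisms, and $\varphi$ is a $K$-algebra monomorphism.

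The verification of Definition \ref{def2} then reduces, item by item, to the set-level facts about $X$, using the dictionary between ideals of $\Lambda$ (resp.\ $B$) generated by idempotents and their supporting subsets of basis idempotents, exactly as in Lemma \ref{bijection}. Concretely: $\varphi(\Lambda)=\amalg_{e\in Y}Ke$ is an ideal of $B$ because products of distinct basis idempotents vanish, giving item $(i)$; $X=\bigcup_g\overline{\beta}_g(Y)$ forces $B=\sum_g\beta_g(\varphi(\Lambda))$, giving item $(ii)$; the intersection of two idempotent-generated ideals corresponds to the intersection of their supports, so $\varphi(S_g)=\varphi(\Lambda)\cap\beta_g(\varphi(\Lambda))$ translates into $Y_g=Y\cap\overline{\beta}_g(Y)$, giving item $(iii)$; and $\overline{\beta}_g|_{Y_{g^{-1}}}=\overline{\alpha}_g$ yields $\varphi\circ\alpha_g=\beta_g\circ\varphi$ on $S_{g^{-1}}$, giving item $(iv)$. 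Having checked all four, uniqueness in Theorem \ref{enveloping action} identifies $(B,\beta)$ with the enveloping action, which is the assertion.

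The step I expect to be the main obstacle is the bookkeeping in the \emph{non-unital} regime: when $G$ (hence $X$) is infinite, $B=\sum_{e\in X}Ke$ is the coproduct, not the product, and so has no identity, so one must be careful that $\varphi(\Lambda)$ is genuinely an ideal, that the spanning condition $(ii)$ uses finitely supported elements, and that the support/intersection correspondence still behaves well. The other point requiring care is simply the invocation of the set-theoretic globalization of $(Y,\overline{\alpha})$: one should make explicit that partial actions on sets always admit an enveloping set and record its three characterizing properties, since the entire algebraic construction is obtained by freely linearizing that combinatorial object.
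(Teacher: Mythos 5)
Your proposal is correct and follows essentially the same route as the paper: linearize the set-level enveloping action $(X,\overline{\beta})$ of $(\{e_i\},\overline{\alpha})$ to get a global action on $B=\sum_{e\in X}Ke$, then check Definition \ref{def2} and invoke uniqueness from Theorem \ref{enveloping action}. The paper simply leaves the item-by-item verification to the reader, which you have usefully spelled out, including the non-unital caveats.
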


\begin{proof} We define the global action $\beta^1$ of $G$ on $B$ by \begin{center} $\beta^{1}_{g}(\sum_{e\in X}\lambda_e e)=\sum_{e\in X}\lambda_e \beta_g(e)$,\end{center}  where $\beta$ is the global action of $G$ on $X$.  We leave to the reader to show that  $(B,\beta^{1})$ is the enveloping action of $(\Lambda,\alpha)$. \end{proof}

{\bf Thanks:} We are grateful to professor M. Dokuchaev that warned us  about the article \cite{EDJ2}  where the authors studied  globalization of  twisted partial actions of groups on algebras decomposed by blocks.

\end{document}